\newtheorem{thm}{Theorem}
\newtheorem{prop}[thm]{Proposition}
\newtheorem{lem}[thm]{Lemma}
\newtheorem{cor}[thm]{Corollary}
\theoremstyle{definition}
\theoremstyle{remark}
\theoremstyle{definition}
\newtheorem{prob}[thm]{Problem}
\theoremstyle{remark}
\newtheorem{rem}[thm]{Remark}
\newcommand{\Hom}{{\rm Hom}}
\title{Answers to some problems about graph coloring test graphs}
\begin{document}

\author{Takahiro Matsushita}


\begin{abstract}
We prove that a graph whose chromatic number is 2 is a homotopy test graph. We also prove that there is a graph $K$ with two involutions $\gamma_1$ and $\gamma_2$ such that $(K,\gamma_1)$ is a Stiefel-Whitney test graph, but $(K,\gamma_2)$ is not. These are answers to some of the problems suggested by Kozlov.
\end{abstract}

\maketitle

\section{Introduction}

One of the most remarkable applications of algebraic topology to combinatorics is Lov$\acute{\rm a}$sz' proof of the Kneser conjecture. In \cite{Lov1}, Lov$\acute{\rm a}$sz introduced the neighborhood complex $N(G)$ of a graph $G$, and showed that the connectivity of $N(G)$ gives a lower bound for the chromatic number of $G$.

The Hom complexes of graphs were defined by Lov$\acute{\rm a}$sz and were first mainly investigated by Babson and Kozlov in \cite{BK1} and \cite{BK2}. The Hom complex ${\rm Hom}(K_2,G)$ from the complete graph $K_2$ with 2-vertices to a graph $G$ is homotopy equivalent to the neighborhood complex $N(G)$ of $G$. Kozlov introduced the notion of homotopy and Stiefel-Whitney test graphs in \cite{Koz1}. A test graph is a graph $T$ such that a certain inequality holds between a topological invariant of ${\rm Hom}(T,G)$ and the chromatic number of $G$. For example, the graph $K_2$ is a homotopy test graph. The conjecture by Lov$\acute{\rm a}$sz states that odd cycles are homotopy test graphs, and this conjecture was solved by Babson and Kozlov in \cite{BK2}.

In \cite{Koz1} Kozlov suggested a number of problems about test graphs. The purpose of this paper is to solve two of them:

\begin{prob}[Conjecture 6.2.1 of \cite{Koz1}\footnote{The precise statement of the conjecture of \cite{Koz1} is ``Every connected bipartite graph is a homotopy test graph". Here we should consider bipartite graphs as graphs with $\chi(G) =2$ since graphs with $\chi(G) \leq 1$ are clearly not homotopy test graphs.}]
Is a graph $G$ with $\chi(G) = 2$ a homotopy test graph?
\end{prob}

\begin{prob}[Section 6.1 of \cite{Koz1}]
Does there exist a graph $K$ having two different flipping involutions $\gamma_1$ and $\gamma_2$, such that $(K,\gamma_1)$ is a Stiefel-Whitney test graph, whereas $(K,\gamma_2)$ is not?
\end{prob}

The following theorems answer the above problems.

\begin{thm}
A graph whose chromatic number is $2$ is a homotopy test graph.
\end{thm}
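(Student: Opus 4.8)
The plan is to show that for a graph $G$ with chromatic number $2$, one has the inequality
\[
\chi(G) \geq \mathrm{conn}(\mathrm{Hom}(G_0, G)) + 2
\]
for an appropriate test graph $G_0$, or more precisely to verify the defining condition of a homotopy test graph directly. Let me reconstruct what I believe the definition requires: a graph $T$ is a homotopy test graph if for every graph $G$, the connectivity of $\mathrm{Hom}(T,G)$ bounds the chromatic number from below, typically in the form
\[
\chi(G) \geq \mathrm{conn}(\mathrm{Hom}(T,G)) + \chi(T).
\]
So for a graph $T$ with $\chi(T)=2$ I must establish $\chi(G) \geq \mathrm{conn}(\mathrm{Hom}(T,G)) + 2$ for all $G$. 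The natural reduction is to the classical case $T = K_2$, since $\mathrm{Hom}(K_2,G)$ is the standard object whose connectivity governs the Lov\'asz bound, and the theorem $\chi(G) \geq \mathrm{conn}(\mathrm{Hom}(K_2,G)) + 2$ is exactly the Babson--Kozlov/Lov\'asz result.

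First I would observe that $\chi(T)=2$ forces $T$ to contain an edge, hence there is a graph homomorphism $K_2 \to T$; and since $T$ is properly $2$-colorable, there is also a homomorphism $T \to K_2$. The key technical step is then to analyze the induced maps on $\mathrm{Hom}$ complexes. The functoriality of $\mathrm{Hom}(-,G)$ is contravariant in the first variable, so these two homomorphisms yield continuous maps
\[
\mathrm{Hom}(T,G) \longrightarrow \mathrm{Hom}(K_2,G) \longrightarrow \mathrm{Hom}(T,G).
\]
The essential point I would need is that a homomorphism $K_2 \to T$ followed by $T \to K_2$ is, up to the relevant equivalence, close enough to the identity (or at least folds onto an edge) so that the connectivity of $\mathrm{Hom}(T,G)$ is bounded below by that of $\mathrm{Hom}(K_2,G)$. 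Concretely, I expect to use that $T \to K_2 \to T$ need not be the identity, but the composite $K_2 \to T \to K_2$ is the identity on $K_2$, which gives that $\mathrm{Hom}(K_2,G) \to \mathrm{Hom}(T,G) \to \mathrm{Hom}(K_2,G)$ is the identity; hence $\mathrm{Hom}(K_2,G)$ is a retract of $\mathrm{Hom}(T,G)$.

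From the retract structure the connectivity inequality follows: a retract of a space $X$ is at least as highly connected as $X$ is, so
\[
\mathrm{conn}(\mathrm{Hom}(T,G)) \leq \mathrm{conn}(\mathrm{Hom}(K_2,G)).
\]
Combining this with the Lov\'asz bound $\chi(G) \geq \mathrm{conn}(\mathrm{Hom}(K_2,G)) + 2$ gives
\[
\chi(G) \geq \mathrm{conn}(\mathrm{Hom}(K_2,G)) + 2 \geq \mathrm{conn}(\mathrm{Hom}(T,G)) + 2,
\]
which is precisely the statement that $T$ is a homotopy test graph. I would carry out the steps in this order: (i) produce the two homomorphisms from $\chi(T)=2$; (ii) set up the contravariant functoriality and identify the retraction; (iii) invoke the connectivity-of-a-retract principle; (iv) quote the $K_2$ case of the Lov\'asz bound and conclude.

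The main obstacle I anticipate is step (ii): I must choose the homomorphism $\varphi\colon K_2 \to T$ and the coloring $c\colon T \to K_2$ so that their composite $c \circ \varphi\colon K_2 \to K_2$ is genuinely the identity rather than the edge-swap, and I must confirm that the induced maps on $\mathrm{Hom}$ complexes are honestly continuous and compose correctly (paying attention to whether $\mathrm{Hom}(-,G)$ is strictly functorial or functorial only up to homotopy). Care is also needed because the edge-swap automorphism of $K_2$ acts nontrivially, so I may instead need to argue at the level of the $\mathbb{Z}_2$-equivariant or unbased connectivity rather than naively; but since the claim only concerns ordinary connectivity, the retract argument should suffice once the composite is pinned down to land in the correct component. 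The remaining steps are then formal.
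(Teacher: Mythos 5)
Your proposal is correct and is essentially the paper's own argument: the paper proves a general lemma that if $T_1$ is a retract of $T_2$ and $T_1$ is a homotopy test graph then so is $T_2$ (via exactly your observation that contravariant functoriality makes $\Hom(T_1,G)$ a retract of $\Hom(T_2,G)$, plus the retract-connectivity principle), and then applies it with $T_1=K_2$, using that any graph with $\chi(T)=2$ retracts onto an edge and that $K_2$ is a homotopy test graph by the Lov\'asz/Babson--Kozlov bound. Your worry about the composite $K_2\to T\to K_2$ being the swap is easily dispatched (relabel the edge embedding, or note that the swap induces a homeomorphism so the retract conclusion survives anyway), and $\Hom(-,G)$ is strictly functorial at the poset level, so no up-to-homotopy care is needed.
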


\begin{thm}
There is a graph $K$ with two involutions $\gamma_1$ and $\gamma_2$ such that $(K,\gamma_1)$ is a Stiefel-Whitney test graph, but $(K,\gamma_2)$ is not.
\end{thm}

The following figure describes the graph $K$ and the involutions $\gamma_1$ and $\gamma_2$.

\vspace{2mm}
\begin{center}
\begin{picture}(70,70)(0,5)
\put(0,40){\circle*{2.5}} \put(10,60){\circle*{2.5}} \put(10,20){\circle*{2.5}} \put(20,30){\circle*{2.5}} \put(20,50){\circle*{2.5}}
\put(40,30){\circle*{2.5}} \put(40,50){\circle*{2.5}} \put(50,20){\circle*{2.5}} \put(50,60){\circle*{2.5}} \put(60,40){\circle*{2.5}}

\put(0,40){\line(1,2){10}} \put(0,40){\line(1,-2){10}} \put(10,60){\line(1,-1){10}} \put(10,20){\line(1,1){10}} \put(20,50){\line(1,0){20}}
\put(20,50){\line(0,-1){20}} \put(20,30){\line(1,0){20}} \put(40,30){\line(0,1){20}} \put(40,30){\line(1,-1){10}} \put(40,50){\line(1,1){10}}
\put(50,60){\line(1,-2){10}} \put(50,20){\line(1,2){10}}

\put(70,40){\vector(0,1){20}} \put(70,40){\vector(0,-1){20}}
\put(30,68){\vector(1,0){20}} \put(30,68){\vector(-1,0){20}}

\put(27,76){$\gamma_2$} \put(75,38){$\gamma_1$} \put(-17,38){$K$} \put(10,4){\bf Figure 1.}
\end{picture}
\end{center}
The involution $\gamma_1$ is the reflection in the horizontal line, and the involution $\gamma_2$ is the reflection in the vertical line.

\vspace{3mm}
\noindent {\bf Acknowledgement.} The author thanks to Toshitake Kohno for helpful suggestions. He would like to express his gratitude to the referees. Their valuable comments and suggestions made the manuscript much improved and readable. The author was supported by the Grant-in-Aid for Scientific Research (KAKENHI No. 25-4699) and the Grant-in-Aid for JSPS fellows. This work was supported by the Program for Leading Graduate Schools, MEXT, Japan.

\section{Preliminaries}
In this section, we review the definitions and known results about test graphs. We refer to \cite{Koz1} and \cite{Koz2} for the more concrete introduction to the subject.

A graph is a pair $G = (V(G),E(G))$ where $V(G)$ is a finite set and $E(G)$ is a symmetric subset of $V(G) \times V(G)$. Hence the graphs considered are finite, non-directed, have no parallel edges, but may have loops. For graphs $G$ and $H$, a graph homomorphism from $G$ to $H$ is a map $f: V(G) \rightarrow V(H)$ such that $(f \times f) (E(G)) \subset E(H)$. For a non-negative integer $n$, we write $K_n$ to indicate the complete graph with $n$-vertices. Then the chromatic number of a graph $G$ is formulated as the number
$$\chi (G) = \inf \{ n \geq 0\; | \textrm{ There is a graph homomorphism $G \rightarrow K_n$.}\}.$$
In this paper, we assume that the infimum (or the supremum) of the empty set is $+\infty$ (or $-\infty$, respectively). Thus if $G$ has a loop, then we consider $\chi(G) = + \infty$.

Let $G$ and $H$ be graphs. A multi-homomorphism from $G$ to $H$ is a map $\eta : V(G) \rightarrow 2^{V(H)} \setminus \{ \emptyset\}$ such that for $(v,w) \in E(G)$, we have $\eta(v) \times \eta(w) \subset E(H)$. For two multi-homomorphisms $\eta$ and $\eta'$ from $G$ to $H$, we write $\eta \leq \eta'$ if $\eta(v) \subset \eta' (v)$ for any $v \in V(G)$. The poset of all multi-homomorphisms from $G$ to $H$ with this ordering is called the Hom complex from $G$ to $H$, and is denoted by ${\rm Hom}(G,H)$. For a graph homomorphism $f:T \rightarrow S$, define the order-preserving map $f^* :{\rm Hom}(S,G) \rightarrow {\rm Hom}(T,G)$ by $\eta \mapsto \eta \circ f$.

Note that a graph homomorphism $f:G \rightarrow H$ can be regarded as a multi-homomorphism $V(G) \rightarrow 2^{V(H)} \setminus \{ \emptyset \}, v \mapsto \{ f(v)\}$.

An involution of a graph $T$ is a graph homomorphism $\gamma : T \rightarrow T$ such that $\gamma^2 = {\rm id}_T$. An involution $\gamma$ of $T$ is flipping if there is a vertex $v \in V(T)$ such that $(v,\gamma(v)) \in E(T)$. A pair $(G,\gamma)$ of a graph $G$ and an involution $\gamma$ of $G$ is called a $\mathbb{Z}_2$-graph, and if $\gamma$ is flipping, then the $\mathbb{Z}_2$-graph $(G,\gamma)$ is said to be flipping.

Let $X$ be a free $\mathbb{Z}_2$-CW-complex. We write $\overline{X}$ to indicate the orbit space of $X$. Let $h(X)$ denote the number
$$\sup \{ n \geq 0 \; | \; w_1(X)^n \neq 0\}$$
where $w_1(X) \in H^1(\overline{X} ; \mathbb{Z}_2)$ denotes the 1st Stiefel-Whitney class of the double cover $X \rightarrow \overline{X}$.

Let $(T,\gamma)$ be a flipping $\mathbb{Z}_2$-graph. Then for any loopless graph $G$, the Hom complex $\Hom (T,G)$ becomes a free $\mathbb{Z}_2$-complex with the involution $\eta \leftrightarrow \eta \circ \gamma$, $(\eta \in \Hom (T,G))$. A flipping $\mathbb{Z}_2$-graph $(T,\gamma)$ is called a Stiefel-Whitney test graph\footnote{In \cite{DS1} another definition of a Stiefel-Whitney test graph is employed. In \cite{DS1} a graph $T$ is a ``Stiefel-Whitney test graph" if for any graph $G$, the inequality $\chi(G) \geq h(\Hom (T,G)) + \chi(T)$ holds. It is easy to see that our definition implies theirs, but it was not shown that these definitions are equivalent.} if the following equality holds for $n \geq \chi(T)$:
$$h (\Hom (T,K_n)) = n - \chi (T).$$
 
Let $n$ be an integer with $n \geq -1$. A topological space $X$ is said to be $n$-connected if for any integer $k$ with $-1 \leq k \leq n$ and for any continuous map $f:S^k \rightarrow X$, there is a continuous map $g:D^{k+1} \rightarrow X$ such that $g|_{S^k} = f$. Here we regard $S^{-1}$ as the empty space and $D^0$ as the one point space. The connectivity of the space $X$ is the number
$${\rm conn}(X) = \sup \{ n \geq -1 \; | \; \textrm{$X$ is $n$-connected.}\}.$$
A graph $T$ is called a homotopy test graph if for any graph $G$, the following inequality holds:
$$\chi (G) > {\rm conn}(\Hom (T,G)) + \chi (T).$$

In this paper, we abbreviate ``a homotopy test graph" to ``an HT-graph", and ``a Stiefel-Whitney test graph" to ``an SWT-graph."

In \cite{Koz1} Kozlov showed that if a flipping $\mathbb{Z}_2$-graph $(T,\gamma)$ is an SWT-graph, then $T$ is an HT-graph. It is clear that a graph having no edges is not an HT-graph. The first non-trivial graph which is not an HT-graph was discovered by Hoory and Linial in \cite{HL}. Babson and Kozlov showed in \cite{BK1} that $K_n$ for $n \geq 2$ is an SWT-graph by the involution exchainging $1$ and $2$ and fixing the other vertices. They also proved in \cite{BK2} that odd cycle $C_{2r+1}$ for $r \geq 1$ is an HT-graph. They conjectured that $C_{2r+1}$ with the reflection is an SWT-graph. This conjecture was firstly solved by Schultz in \cite{Sch1}, and other proofs are found in \cite{Koz3} and \cite{Sch2}.

\section{Proof of Theorem 3}

Recall that a space $Y$ is called a retract of a space $X$ if there are continuous maps $i:Y \rightarrow X$ and $r:X \rightarrow Y$ such that $ri = {\rm id}_Y$.

\begin{lem}
Suppose that a space $Y$ is a retract of a space $X$. If $X$ is $n$-connected, then $Y$ is also $n$-connected.
\end{lem}
\begin{proof}
Let $i:Y \rightarrow X$ and $r:X \rightarrow Y$ be continuous maps such that $ri = {\rm id}_Y$. Let $k$ be an integer such that $-1 \leq k \leq n$ and $f:S^k \rightarrow Y$ a continuous map. Since $X$ is $n$-connected, the map $if: S^k \rightarrow X$ can be extended to a continuous map $g:D^{k+1} \rightarrow X$. Then the map $rg: D^{k+1} \rightarrow Y$ is an extension of $f$. Therefore $Y$ is $n$-connected.
\end{proof}

Similarly, a graph $S$ is called a retract of a graph $T$ if there are graph homomorphisms $i:S \rightarrow T$ and $r :T \rightarrow S$ such that $ri = {\rm id}_S$. In this case, we have that $\chi(S) = \chi(T)$ since there are graph homomorphisms from each of them to the other. Let $G$ be another graph. Notice that the space ${\rm Hom}(S,G)$ is also a retract of the space ${\rm Hom}(T,G)$. Indeed, we have that $i^* r^* = (ri)^* ={\rm id}$.

\begin{lem}
Suppose that a graph $S$ is a retract of a graph $T$. If $S$ is an HT-graph, then so is $T$.
\end{lem}
\begin{proof}
Let $G$ be a graph. Suppose that ${\rm Hom}(T,G)$ is $n$-connected. By Lemma 5, we have that ${\rm Hom}(S,G)$ is $n$-connected. Since $S$ is an HT-graph and $\chi(S) = \chi(T)$, we have
$$\chi(G) > n+ \chi(S) = n + \chi(T).$$
This implies that $T$ is an HT-graph.
\end{proof}

We are now ready to prove Theorem 3. We actually prove the following which clearly implies Theorem 3.

\begin{thm}
Let $T$ be a graph with $\chi(T) = n$. If $T$ has a subgraph isomorphic to $K_n$, then $T$ is an HT-graph.
\end{thm}
\begin{proof}
By the hypothesis, there are graph homomorphisms $i: K_n \rightarrow T$ and $r: T \rightarrow K_n$. Note that any graph homomorphism from $K_n$ to $K_n$ is an isomorphism. Thus we have $(ri)^{-1} ri = {\rm id}_{K_n}$, and hence $K_n$ is a retract of $T$. Since $K_n$ is an HT-graph, it follows from Lemma 6 that $T$ is an HT-graph. 
\end{proof}

\section{Proof of Theorem 4}

\begin{figure}[b]
\begin{center}
The graph homomorphism $f$.

\begin{picture}(90,80)(0,-10)
\put(0,30){\circle*{3}} \put(15,0){\circle*{3}} \put(15,60){\circle*{3}} \put(30,45){\circle*{3}} \put(30,15){\circle*{3}} \put(60,15){\circle*{3}}
\put(60,45){\circle*{3}} \put(75,60){\circle*{3}} \put(75,0){\circle*{3}} \put(90,30){\circle*{3}}
\put(0,30){\line(1,2){15}} \put(0,30){\line(1,-2){15}} \put(15,60){\line(1,-1){15}} \put(15,0){\line(1,1){15}} \put(30,15){\line(0,1){30}} \put(30,15){\line(1,0){30}} \put(30,45){\line(1,0){30}} \put(60,15){\line(0,1){30}} \put(60,15){\line(1,-1){15}} \put(60,45){\line(1,1){15}} \put(75,0){\line(1,2){15}}
\put(75,60){\line(1,-2){15}}

\put(-10,27){2} \put(13,47){1} \put(13,6){1} \put(30,4){3} \put(30,49){2} \put(55,4){2} \put(55,49){1} \put(72,47){3} \put(72,6){3} \put(93,27){1}
\end{picture}

{\bf Figure 2.}
\end{center}
\end{figure}

We start with the following lemma which allows us to prove that $(K,\gamma_1)$ is an SWT-graph. This is known, but we give the proof for reader's convenience.

\begin{prop}[Kozlov, Proposition 6.1.5 of \cite{Koz1}]
Let $A$, $B$, and $C$ be flipping $\mathbb{Z}_2$-graphs satisfying the following:
\begin{itemize}
\item[(a)] $A$ and $C$ are SWT-graphs.
\item[(b)] $\chi(A) = \chi(C)$.
\item[(c)] There are $\mathbb{Z}_2$-equivariant graph homomorphisms $f:A \rightarrow B$ and $g:B \rightarrow C$.
\end{itemize}
Then $B$ is an SWT-graph.
\end{prop}
\begin{proof}
By the condition (c), we have $\chi(A) \leq \chi(B) \leq \chi(C)$. Since $\chi(A) = \chi(C)$, we have $\chi(A) = \chi(B) = \chi(C)$. Let $n$ be an integer with $n \geq 2$. By the condition (c), we have that there are $\mathbb{Z}_2$-maps ${\rm Hom}(C,G) \rightarrow {\rm Hom}(B,G)$ and ${\rm Hom}(B,G) \rightarrow {\rm Hom}(A,G)$. This implies
$$n-\chi(C) = h({\rm Hom}(C,G)) \leq h({\rm Hom}(B,G) \leq h({\rm Hom}(A,G)) = n- \chi(A)$$
since $A$ and $C$ are SWT-graphs. Hence we have $h({\rm Hom}(B,G)) = n- \chi(B)$.
\end{proof}

\begin{cor}
Let $K$ and $\gamma_1$ be those described in Section 1. Then the flipping $\mathbb{Z}_2$-graph $(K,\gamma_1)$ is an SWT-graph.
\end{cor}

\newpage
\begin{center}
\thispagestyle{empty}
\setlength\unitlength{0.5truecm}
\begin{picture}(15,44)(0,-3)
\put(6.6,-1){\bf Figure 3.}

\put(2.8,42.8){$f$}

\put(12.2,42.8){$f \circ \gamma_2$}

\put(1,38.5){\circle*{0.2}} \put(0,40.5){\circle*{0.2}}
\put(1,42.5){\circle*{0.2}}
\put(2,39.5){\circle*{0.2}}
\put(2,41.5){\circle*{0.2}}
\put(4,41.5){\circle*{0.2}}
\put(4,39.5){\circle*{0.2}}
\put(5,38.5){\circle*{0.2}}
\put(6,40.5){\circle*{0.2}}
\put(5,42.5){\circle*{0.2}}

\put(0,40.5){\line(1,2){1}}
\put(0,40.5){\line(1,-2){1}}
\put(1,38.5){\line(1,1){1}}
\put(1,42.5){\line(1,-1){1}}
\put(2,39.5){\line(1,0){2}}
\put(2,39.5){\line(0,1){2}}
\put(2,41.5){\line(1,0){2}}
\put(4,39.5){\line(0,1){2}}
\put(4,39.5){\line(1,-1){1}}
\put(5,38.5){\line(1,2){1}}
\put(4,41.5){\line(1,1){1}}
\put(5,42.5){\line(1,-2){1}}

\put(1,33){\circle*{0.2}}
\put(0,35){\circle*{0.2}}
\put(1,37){\circle*{0.2}}
\put(2,34){\circle*{0.2}}
\put(2,36){\circle*{0.2}}
\put(4,36){\circle*{0.2}}
\put(4,34){\circle*{0.2}}
\put(5,33){\circle*{0.2}}
\put(6,35){\circle*{0.2}}
\put(5,37){\circle*{0.2}}

\put(0,35){\line(1,2){1}}
\put(0,35){\line(1,-2){1}}
\put(1,33){\line(1,1){1}}
\put(1,37){\line(1,-1){1}}
\put(2,34){\line(1,0){2}}
\put(2,34){\line(0,1){2}}
\put(2,36){\line(1,0){2}}
\put(4,34){\line(0,1){2}}
\put(4,34){\line(1,-1){1}}
\put(5,33){\line(1,2){1}}
\put(4,36){\line(1,1){1}}
\put(5,37){\line(1,-2){1}}

\put(1,27.5){\circle*{0.2}}
\put(0,29.5){\circle*{0.2}}
\put(1,31.5){\circle*{0.2}}
\put(2,28.5){\circle*{0.2}}
\put(2,30.5){\circle*{0.2}}
\put(4,30.5){\circle*{0.2}}
\put(4,28.5){\circle*{0.2}}
\put(5,27.5){\circle*{0.2}}
\put(6,29.5){\circle*{0.2}}
\put(5,31.5){\circle*{0.2}}

\put(0,29.5){\line(1,2){1}}
\put(0,29.5){\line(1,-2){1}}
\put(1,27.5){\line(1,1){1}}
\put(1,31.5){\line(1,-1){1}}
\put(2,28.5){\line(1,0){2}}
\put(2,28.5){\line(0,1){2}}
\put(2,30.5){\line(1,0){2}}
\put(4,28.5){\line(0,1){2}}
\put(4,28.5){\line(1,-1){1}}
\put(5,27.5){\line(1,2){1}}
\put(4,30.5){\line(1,1){1}}
\put(5,31.5){\line(1,-2){1}}

\put(1,22){\circle*{0.2}}
\put(0,24){\circle*{0.2}}
\put(1,26){\circle*{0.2}}
\put(2,23){\circle*{0.2}}
\put(2,25){\circle*{0.2}}
\put(4,25){\circle*{0.2}}
\put(4,23){\circle*{0.2}}
\put(5,22){\circle*{0.2}}
\put(6,24){\circle*{0.2}}
\put(5,26){\circle*{0.2}}

\put(0,24){\line(1,2){1}}
\put(0,24){\line(1,-2){1}}
\put(1,22){\line(1,1){1}}
\put(1,26){\line(1,-1){1}}
\put(2,23){\line(1,0){2}}
\put(2,23){\line(0,1){2}}
\put(2,25){\line(1,0){2}}
\put(4,23){\line(0,1){2}}
\put(4,23){\line(1,-1){1}}
\put(5,22){\line(1,2){1}}
\put(4,25){\line(1,1){1}}
\put(5,26){\line(1,-2){1}}

\put(1,16.5){\circle*{0.2}}
\put(0,18.5){\circle*{0.2}}
\put(1,20.5){\circle*{0.2}}
\put(2,17.5){\circle*{0.2}}
\put(2,19.5){\circle*{0.2}}
\put(4,19.5){\circle*{0.2}}
\put(4,17.5){\circle*{0.2}}
\put(5,16.5){\circle*{0.2}}
\put(6,18.5){\circle*{0.2}}
\put(5,20.5){\circle*{0.2}}

\put(0,18.5){\line(1,2){1}}
\put(0,18.5){\line(1,-2){1}}
\put(1,16.5){\line(1,1){1}}
\put(1,20.5){\line(1,-1){1}}
\put(2,17.5){\line(1,0){2}}
\put(2,17.5){\line(0,1){2}}
\put(2,19.5){\line(1,0){2}}
\put(4,17.5){\line(0,1){2}}
\put(4,17.5){\line(1,-1){1}}
\put(5,16.5){\line(1,2){1}}
\put(4,19.5){\line(1,1){1}}
\put(5,20.5){\line(1,-2){1}}

\put(1,11){\circle*{0.2}}
\put(0,13){\circle*{0.2}}
\put(1,15){\circle*{0.2}}
\put(2,12){\circle*{0.2}}
\put(2,14){\circle*{0.2}}
\put(4,14){\circle*{0.2}}
\put(4,12){\circle*{0.2}}
\put(5,11){\circle*{0.2}}
\put(6,13){\circle*{0.2}}
\put(5,15){\circle*{0.2}}

\put(0,13){\line(1,2){1}}
\put(0,13){\line(1,-2){1}}
\put(1,11){\line(1,1){1}}
\put(1,15){\line(1,-1){1}}
\put(2,12){\line(1,0){2}}
\put(2,12){\line(0,1){2}}
\put(2,14){\line(1,0){2}}
\put(4,12){\line(0,1){2}}
\put(4,12){\line(1,-1){1}}
\put(5,11){\line(1,2){1}}
\put(4,14){\line(1,1){1}}
\put(5,15){\line(1,-2){1}}

\put(1,5.5){\circle*{0.2}}
\put(0,7.5){\circle*{0.2}}
\put(1,9.5){\circle*{0.2}}
\put(2,6.5){\circle*{0.2}}
\put(2,8.5){\circle*{0.2}}
\put(4,8.5){\circle*{0.2}}
\put(4,6.5){\circle*{0.2}}
\put(5,5.5){\circle*{0.2}}
\put(6,7.5){\circle*{0.2}}
\put(5,9.5){\circle*{0.2}}

\put(0,7.5){\line(1,2){1}}
\put(0,7.5){\line(1,-2){1}}
\put(1,5.5){\line(1,1){1}}
\put(1,9.5){\line(1,-1){1}}
\put(2,6.5){\line(1,0){2}}
\put(2,6.5){\line(0,1){2}}
\put(2,8.5){\line(1,0){2}}
\put(4,6.5){\line(0,1){2}}
\put(4,6.5){\line(1,-1){1}}
\put(5,5.5){\line(1,2){1}}
\put(4,8.5){\line(1,1){1}}
\put(5,9.5){\line(1,-2){1}}

\put(1,0){\circle*{0.2}}
\put(0,2){\circle*{0.2}}
\put(1,4){\circle*{0.2}}
\put(2,1){\circle*{0.2}}
\put(2,3){\circle*{0.2}}
\put(4,3){\circle*{0.2}}
\put(4,1){\circle*{0.2}}
\put(5,0){\circle*{0.2}}
\put(6,2){\circle*{0.2}}
\put(5,4){\circle*{0.2}}

\put(0,2){\line(1,2){1}}
\put(0,2){\line(1,-2){1}}
\put(1,0){\line(1,1){1}}
\put(1,4){\line(1,-1){1}}
\put(2,1){\line(1,0){2}}
\put(2,1){\line(0,1){2}}
\put(2,3){\line(1,0){2}}
\put(4,1){\line(0,1){2}}
\put(4,1){\line(1,-1){1}}
\put(5,0){\line(1,2){1}}
\put(4,3){\line(1,1){1}}
\put(5,4){\line(1,-2){1}}

\put(11,38.5){\circle*{0.2}}
\put(10,40.5){\circle*{0.2}}
\put(11,42.5){\circle*{0.2}}
\put(12,39.5){\circle*{0.2}}
\put(12,41.5){\circle*{0.2}}
\put(14,41.5){\circle*{0.2}}
\put(14,39.5){\circle*{0.2}}
\put(15,38.5){\circle*{0.2}}
\put(16,40.5){\circle*{0.2}}
\put(15,42.5){\circle*{0.2}}

\put(10,40.5){\line(1,2){1}}
\put(10,40.5){\line(1,-2){1}}
\put(11,38.5){\line(1,1){1}}
\put(11,42.5){\line(1,-1){1}}
\put(12,39.5){\line(1,0){2}}
\put(12,39.5){\line(0,1){2}}
\put(12,41.5){\line(1,0){2}}
\put(14,39.5){\line(0,1){2}}
\put(14,39.5){\line(1,-1){1}}
\put(15,38.5){\line(1,2){1}}
\put(14,41.5){\line(1,1){1}}
\put(15,42.5){\line(1,-2){1}}

\put(11,33){\circle*{0.2}}
\put(10,35){\circle*{0.2}}
\put(11,37){\circle*{0.2}}
\put(12,34){\circle*{0.2}}
\put(12,36){\circle*{0.2}}
\put(14,36){\circle*{0.2}}
\put(14,34){\circle*{0.2}}
\put(15,33){\circle*{0.2}}
\put(16,35){\circle*{0.2}}
\put(15,37){\circle*{0.2}}

\put(10,35){\line(1,2){1}}
\put(10,35){\line(1,-2){1}}
\put(11,33){\line(1,1){1}}
\put(11,37){\line(1,-1){1}}
\put(12,34){\line(1,0){2}}
\put(12,34){\line(0,1){2}}
\put(12,36){\line(1,0){2}}
\put(14,34){\line(0,1){2}}
\put(14,34){\line(1,-1){1}}
\put(15,33){\line(1,2){1}}
\put(14,36){\line(1,1){1}}
\put(15,37){\line(1,-2){1}}

\put(11,27.5){\circle*{0.2}}
\put(10,29.5){\circle*{0.2}}
\put(11,31.5){\circle*{0.2}}
\put(12,28.5){\circle*{0.2}}
\put(12,30.5){\circle*{0.2}}
\put(14,30.5){\circle*{0.2}}
\put(14,28.5){\circle*{0.2}}
\put(15,27.5){\circle*{0.2}}
\put(16,29.5){\circle*{0.2}}
\put(15,31.5){\circle*{0.2}}

\put(10,29.5){\line(1,2){1}}
\put(10,29.5){\line(1,-2){1}}
\put(11,27.5){\line(1,1){1}}
\put(11,31.5){\line(1,-1){1}}
\put(12,28.5){\line(1,0){2}}
\put(12,28.5){\line(0,1){2}}
\put(12,30.5){\line(1,0){2}}
\put(14,28.5){\line(0,1){2}}
\put(14,28.5){\line(1,-1){1}}
\put(15,27.5){\line(1,2){1}}
\put(14,30.5){\line(1,1){1}}
\put(15,31.5){\line(1,-2){1}}

\put(11,22){\circle*{0.2}}
\put(10,24){\circle*{0.2}}
\put(11,26){\circle*{0.2}}
\put(12,23){\circle*{0.2}}
\put(12,25){\circle*{0.2}}
\put(14,25){\circle*{0.2}}
\put(14,23){\circle*{0.2}}
\put(15,22){\circle*{0.2}}
\put(16,24){\circle*{0.2}}
\put(15,26){\circle*{0.2}}

\put(10,24){\line(1,2){1}}
\put(10,24){\line(1,-2){1}}
\put(11,22){\line(1,1){1}}
\put(11,26){\line(1,-1){1}}
\put(12,23){\line(1,0){2}}
\put(12,23){\line(0,1){2}}
\put(12,25){\line(1,0){2}}
\put(14,23){\line(0,1){2}}
\put(14,23){\line(1,-1){1}}
\put(15,22){\line(1,2){1}}
\put(14,25){\line(1,1){1}}
\put(15,26){\line(1,-2){1}}

\put(11,16.5){\circle*{0.2}}
\put(10,18.5){\circle*{0.2}}
\put(11,20.5){\circle*{0.2}}
\put(12,17.5){\circle*{0.2}}
\put(12,19.5){\circle*{0.2}}
\put(14,19.5){\circle*{0.2}}
\put(14,17.5){\circle*{0.2}}
\put(15,16.5){\circle*{0.2}}
\put(16,18.5){\circle*{0.2}}
\put(15,20.5){\circle*{0.2}}

\put(10,18.5){\line(1,2){1}}
\put(10,18.5){\line(1,-2){1}}
\put(11,16.5){\line(1,1){1}}
\put(11,20.5){\line(1,-1){1}}
\put(12,17.5){\line(1,0){2}}
\put(12,17.5){\line(0,1){2}}
\put(12,19.5){\line(1,0){2}}
\put(14,17.5){\line(0,1){2}}
\put(14,17.5){\line(1,-1){1}}
\put(15,16.5){\line(1,2){1}}
\put(14,19.5){\line(1,1){1}}
\put(15,20.5){\line(1,-2){1}}

\put(11,11){\circle*{0.2}}
\put(10,13){\circle*{0.2}}
\put(11,15){\circle*{0.2}}
\put(12,12){\circle*{0.2}}
\put(12,14){\circle*{0.2}}
\put(14,14){\circle*{0.2}}
\put(14,12){\circle*{0.2}}
\put(15,11){\circle*{0.2}}
\put(16,13){\circle*{0.2}}
\put(15,15){\circle*{0.2}}

\put(10,13){\line(1,2){1}}
\put(10,13){\line(1,-2){1}}
\put(11,11){\line(1,1){1}}
\put(11,15){\line(1,-1){1}}
\put(12,12){\line(1,0){2}}
\put(12,12){\line(0,1){2}}
\put(12,14){\line(1,0){2}}
\put(14,12){\line(0,1){2}}
\put(14,12){\line(1,-1){1}}
\put(15,11){\line(1,2){1}}
\put(14,14){\line(1,1){1}}
\put(15,15){\line(1,-2){1}}

\put(11,5.5){\circle*{0.2}}
\put(10,7.5){\circle*{0.2}}
\put(11,9.5){\circle*{0.2}}
\put(12,6.5){\circle*{0.2}}
\put(12,8.5){\circle*{0.2}}
\put(14,8.5){\circle*{0.2}}
\put(14,6.5){\circle*{0.2}}
\put(15,5.5){\circle*{0.2}}
\put(16,7.5){\circle*{0.2}}
\put(15,9.5){\circle*{0.2}}

\put(10,7.5){\line(1,2){1}}
\put(10,7.5){\line(1,-2){1}}
\put(11,5.5){\line(1,1){1}}
\put(11,9.5){\line(1,-1){1}}
\put(12,6.5){\line(1,0){2}}
\put(12,6.5){\line(0,1){2}}
\put(12,8.5){\line(1,0){2}}
\put(14,6.5){\line(0,1){2}}
\put(14,6.5){\line(1,-1){1}}
\put(15,5.5){\line(1,2){1}}
\put(14,8.5){\line(1,1){1}}
\put(15,9.5){\line(1,-2){1}}

\put(11,0){\circle*{0.2}}
\put(10,2){\circle*{0.2}}
\put(11,4){\circle*{0.2}}
\put(12,1){\circle*{0.2}}
\put(12,3){\circle*{0.2}}
\put(14,3){\circle*{0.2}}
\put(14,1){\circle*{0.2}}
\put(15,0){\circle*{0.2}}
\put(16,2){\circle*{0.2}}
\put(15,4){\circle*{0.2}}

\put(10,2){\line(1,2){1}}
\put(10,2){\line(1,-2){1}}
\put(11,0){\line(1,1){1}}
\put(11,4){\line(1,-1){1}}
\put(12,1){\line(1,0){2}}
\put(12,1){\line(0,1){2}}
\put(12,3){\line(1,0){2}}
\put(14,1){\line(0,1){2}}
\put(14,1){\line(1,-1){1}}
\put(15,0){\line(1,2){1}}
\put(14,3){\line(1,1){1}}
\put(15,4){\line(1,-2){1}}

\put(7.5,2){\vector(1,0){1}}

\put(3,5){\vector(0,-1){1}}
\put(3,10.5){\vector(0,-1){1}}
\put(3,16){\vector(0,-1){1}}
\put(3,21.5){\vector(0,-1){1}}
\put(3,27){\vector(0,-1){1}}
\put(3,32.5){\vector(0,-1){1}}
\put(3,38){\vector(0,-1){1}}

\put(13,4){\vector(0,1){1}}
\put(13,9.5){\vector(0,1){1}}
\put(13,15){\vector(0,1){1}}
\put(13,20.5){\vector(0,1){1}}
\put(13,26){\vector(0,1){1}}
\put(13,31.5){\vector(0,1){1}}
\put(13,37){\vector(0,1){1}}

\put(0.3,1.8){$3$}
\put(0.9,3.2){$1$}
\put(0.9,0.3){$2$}
\put(2,3.2){$3$}
\put(2,0.3){$1$}
\put(3.7,3.2){$1$}
\put(3.7,0.3){${\bf 3}$}
\put(4.8,3.2){$3$}
\put(4.8,0.3){$1$}
\put(5.4,1.8){$2$}

\put(0.3,7.3){$3$}
\put(0.9,8.7){$1$}
\put(0.9,5.8){$2$}
\put(2,8.7){$3$}
\put(2,5.8){$1$}
\put(3.7,8.7){$1$}
\put(3.7,5.8){$2$}
\put(4.8,8.7){$3$}
\put(4.8,5.8){${\bf 1}$}
\put(5.4,7.3){$2$}

\put(0.3,12.8){$3$}
\put(0.9,14.2){$1$}
\put(0.9,11.3){$2$}
\put(2,14.2){$3$}
\put(2,11.3){$1$}
\put(3.7,14.2){$1$}
\put(3.7,11.3){$2$}
\put(4.8,14.2){$3$}
\put(4.8,11.3){$3$}
\put(5.4,12.8){${\bf 2}$}

\put(0.3,18.3){$3$}
\put(0.9,19.7){$1$}
\put(0.9,16.8){$2$}
\put(2,19.7){${\bf 3}$}
\put(2,16.8){$1$}
\put(3.7,19.7){$1$}
\put(3.7,16.8){$2$}
\put(4.8,19.7){$3$}
\put(4.8,16.8){$3$}
\put(5.4,18.3){$1$}

\put(0.3,23.8){$3$}
\put(0.9,25.2){$1$}
\put(0.9,22.3){$2$}
\put(2,25.2){$2$}
\put(2,22.3){${\bf 1}$}
\put(3.7,25.2){$1$}
\put(3.7,22.3){$2$}
\put(4.8,25.2){$3$}
\put(4.8,22.3){$3$}
\put(5.4,23.8){$1$}

\put(0.3,29.3){$3$}
\put(0.9,30.7){$1$}
\put(0.9,27.8){${\bf 2}$}
\put(2,30.7){$2$}
\put(2,27.8){$3$}
\put(3.7,30.7){$1$}
\put(3.7,27.8){$2$}
\put(4.8,30.7){$3$}
\put(4.8,27.8){$3$}
\put(5.4,29.3){$1$}

\put(0.3,34.8){${\bf 3}$}
\put(0.9,36.2){$1$}
\put(0.9,33.3){$1$}
\put(2,36.2){$2$}
\put(2,33.3){$3$}
\put(3.7,36.2){$1$}
\put(3.7,33.3){$2$}
\put(4.8,36.2){$3$}
\put(4.8,33.3){$3$}
\put(5.4,34.8){$1$}

\put(0.3,40.3){$2$}
\put(0.9,41.7){$1$}
\put(0.9,38.8){$1$}
\put(2,41.7){$2$}
\put(2,38.8){$3$}
\put(3.7,41.7){$1$}
\put(3.7,38.8){$2$}
\put(4.8,41.7){$3$}
\put(4.8,38.8){$3$}
\put(5.4,40.3){$1$}

\put(10.3,1.8){$3$}
\put(10.9,3.2){$1$}
\put(10.9,0.3){$2$}
\put(12,3.2){$3$}
\put(12,0.3){$1$}
\put(13.7,3.2){${\bf 2}$}
\put(13.7,0.3){$3$}
\put(14.8,3.2){$3$}
\put(14.8,0.3){$1$}
\put(15.4,1.8){$2$}

\put(10.3,7.3){$3$}
\put(10.9,8.7){$1$}
\put(10.9,5.8){$2$}
\put(12,8.7){$3$}
\put(12,5.8){$1$}
\put(13.7,8.7){$2$}
\put(13.7,5.8){$3$}
\put(14.8,8.7){${\bf 1}$}
\put(14.8,5.8){$1$}
\put(15.4,7.3){$2$}

\put(10.3,12.8){$3$}
\put(10.9,14.2){${\bf 2}$}
\put(10.9,11.3){$2$}
\put(12,14.2){$3$}
\put(12,11.3){$1$}
\put(13.7,14.2){$2$}
\put(13.7,11.3){$3$}
\put(14.8,14.2){$1$}
\put(14.8,11.3){$1$}
\put(15.4,12.8){$2$}

\put(10.3,18.3){${\bf 1}$}
\put(10.9,19.7){$2$}
\put(10.9,16.8){$2$}
\put(12,19.7){$3$}
\put(12,16.8){$1$}
\put(13.7,19.7){$2$}
\put(13.7,16.8){$3$}
\put(14.8,19.7){$1$}
\put(14.8,16.8){$1$}
\put(15.4,18.3){$2$}

\put(10.3,23.8){$1$}
\put(10.9,25.2){$2$}
\put(10.9,22.3){${\bf 3}$}
\put(12,25.2){$3$}
\put(12,22.3){$1$}
\put(13.7,25.2){$2$}
\put(13.7,22.3){$3$}
\put(14.8,25.2){$1$}
\put(14.8,22.3){$1$}
\put(15.4,23.8){$2$}

\put(10.3,29.3){$1$}
\put(10.9,30.7){$2$}
\put(10.9,27.8){$3$}
\put(12,30.7){$3$}
\put(12,27.8){${\bf 2}$}
\put(13.7,30.7){$2$}
\put(13.7,27.8){$3$}
\put(14.8,30.7){$1$}
\put(14.8,27.8){$1$}
\put(15.4,29.3){$2$}

\put(10.3,34.8){$1$}
\put(10.9,36.2){$2$}
\put(10.9,33.3){$3$}
\put(12,36.2){${\bf 1}$}
\put(12,33.3){$2$}
\put(13.7,36.2){$2$}
\put(13.7,33.3){$3$}
\put(14.8,36.2){$1$}
\put(14.8,33.3){$1$}
\put(15.4,34.8){$2$}

\put(10.3,40.3){$1$}
\put(10.9,41.7){${\bf 3}$}
\put(10.9,38.8){$3$}
\put(12,41.7){$1$}
\put(12,38.8){$2$}
\put(13.7,41.7){$2$}
\put(13.7,38.8){$3$}
\put(14.8,41.7){$1$}
\put(14.8,38.8){$1$}
\put(15.4,40.3){$2$}
\end{picture}

\end{center}

\begin{proof}
Here we regard $K$ as the $\mathbb{Z}_2$-graph by the involution $\gamma_1$. Then there are $\mathbb{Z}_2$-equivariant graph homomorphisms $C_5 \rightarrow K$ and $K \rightarrow C_5$. Since $C_5$ with the reflection is an SWT-graph, we have that $(K,\gamma_1)$ is an SWT-graph by Proposition 8.
\end{proof}

Therefore, for the proof of Theorem 4, it suffices to prove the following.

\begin{prop}
The flipping $\mathbb{Z}_2$-graph $(K,\gamma_2)$ is not an SWT-graph.
\end{prop}

\begin{proof}
Here we regard $K$ as a $\mathbb{Z}_2$-graph with the involution $\gamma_2$. Let $f:K \rightarrow K_3$ be the graph homomorphism described in Figure 2. It is enough to show that $f$ and $f \circ \gamma_2$ belong the same connected component of $\Hom (K,K_3)$. In fact, if $f$ and $f \circ \gamma_2$ are contained in the same connected component, there is a $\mathbb{Z}_2$-equivariant map from $S^1$ to ${\rm Hom}(K,K_3)$, and hence $w_1(\Hom(K,K_3)) \neq 0$. This implies that $(K,\gamma_2)$ is not an SWT-graph.

Consider the following condition for graph homomorphisms $\varphi, \psi : K \rightarrow K_3$.

\begin{itemize}
\item[$(*)$] There is a vertex $x$ of $K$ such that $\varphi(v) = \psi (v)$ for any $v \in V(K) \setminus \{ x\}$.
\end{itemize}
If the pair of graph homomorphisms $\varphi$, $\psi$ satisfies the condition $(*)$, then the map $\eta: V(K) \rightarrow 2^{V(K_3)} \setminus \{ \emptyset \}$ defined by $\eta(v) = \{ \varphi (v), \psi (v)\}$ is a multi-homomorphism such that $\varphi \leq \eta$ and $\psi \leq \eta$, and hence $\varphi$ and $\psi$ belong to the same connected component. Since the front and  back of each arrow in Figure 3 satisfy the condition $(*)$, $f$ and $f \circ \gamma_2$ belong to the same connected component of $\Hom (K,K_3)$.
\end{proof}

\begin{rem}
Here we write SWT$'$-graphs to mean Stiefel-Whitney test graphs in the sense of \cite{DS1} (see Section 2). Since $h(\Hom (K,K_3)) \geq 1$, the $\mathbb{Z}_2$-graph $(K,\gamma_2)$ is not an SWT$'$-graph. On the other hand $(K,\gamma_1)$ is an SWT$'$-graph since SWT-graphs are SWT$'$-graphs. Hence even if we replace the definition of Stiefel-Whitney test graphs of Theorem 4, the assertion is also true.
\end{rem}

As was mentioned in Section 2, if the flipping $\mathbb{Z}_2$-graph $(T,\gamma)$ is an SWT-graph, then $T$ is an HT-graph. We assert that the converse does not hold.

\begin{cor}
There is a flipping $\mathbb{Z}_2$-graph $(T,\gamma)$ such that $T$ is an HT-graph but $(T,\gamma)$ is not an SWT-graph.
\end{cor}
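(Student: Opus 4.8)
The plan is to take the very same graph $T$ together with the involution $\gamma_2$ already constructed in Figure 1, and to assemble the corollary purely from the results proved earlier in this section. The point is that the two halves of Theorem 2 pull in opposite directions on the same pair $(T,\gamma_2)$: one guarantees that $T$ is an HT-graph while the other denies that $(T,\gamma_2)$ is an SWT-graph, which is exactly the separation the corollary asserts.

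Concretely, I would first check that $(T,\gamma_2)$ is a flipping $\mathbb{Z}_2$-graph, so that the statement even makes sense. This is visible from Figure 1: the two central vertices joined by the horizontal edge bridging the two copies of $C_5$ are interchanged by the vertical reflection $\gamma_2$, and they are adjacent, so there is $v\in V(T)$ with $(v,\gamma_2(v))\in E(T)$, i.e. $\gamma_2$ is flipping. Next I would argue that $T$ is an HT-graph. By Corollary 5 the pair $(T,\gamma_1)$ is an SWT-graph, and by the general fact recalled in Section 2, namely that every SWT-graph is an HT-graph, it follows that $T$ is an HT-graph. Finally, Proposition 6 (the substance of Theorem 2) states precisely that $(T,\gamma_2)$ is not an SWT-graph.

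Combining these three observations, the flipping $\mathbb{Z}_2$-graph $(T,\gamma_2)$ has the property that its underlying graph $T$ is an HT-graph while $(T,\gamma_2)$ itself fails to be an SWT-graph, which is the desired example with $\gamma=\gamma_2$.

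There is no genuine obstacle here, since every ingredient has already been established; the only thing to be careful about is the bookkeeping, in particular recording that $\gamma_2$ is flipping and that the passage from ``SWT-graph'' to ``HT-graph'' is applied to the pair $(T,\gamma_1)$ (which yields the property of the plain graph $T$) while the negative conclusion is read off for the \emph{distinct} involution $\gamma_2$. This is exactly why the example shows that the implication ``$(T,\gamma)$ is an SWT-graph $\Rightarrow$ $T$ is an HT-graph'' cannot be reversed: the HT property of $T$ is witnessed by $\gamma_1$, yet it does not force every flipping involution, such as $\gamma_2$, to make $(T,\gamma)$ an SWT-graph.
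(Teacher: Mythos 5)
Your proof is correct and follows essentially the same route as the paper's: both take $(T,\gamma_2)$, deduce that $T$ is an HT-graph from the fact that $(T,\gamma_1)$ is an SWT-graph together with the implication ``SWT-graph $\Rightarrow$ HT-graph,'' and then invoke the proposition that $(T,\gamma_2)$ is not an SWT-graph. Your additional verification that $\gamma_2$ is flipping is a small piece of bookkeeping the paper leaves implicit, but it does not change the argument.
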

\begin{proof}
Let $K,\gamma_1,\gamma_2$ be those of Theorem 4. Then since $(K,\gamma_1)$ is an SWT-graph, $K$ is an HT-graph. But $(K,\gamma_2)$ is not an SWT-graph.
\end{proof}

\end{document}